\documentclass[reqno]{amsart}
\usepackage{amsmath, amssymb, amsthm, geometry, enumerate, graphicx, amsfonts, hyperref, mathrsfs}
\hypersetup{colorlinks=true,linkcolor=red, anchorcolor=blue, citecolor=blue, urlcolor=red, filecolor=magenta, pdftoolbar=true}
\theoremstyle{plain}
\newtheorem{thm}{\bf Theorem}[section]

\newtheorem{cor}[thm]{\bf Corollary}
\newtheorem{lem}[thm]{\bf Lemma}
\theoremstyle{remark}
\newtheorem{defn}[thm]{\bf Def{}inition}
\newtheorem{rem}[thm]{\bf Remark}
\newtheorem{exa}[thm]{\bf Example}
\numberwithin{equation}{section}
\usepackage{enumerate}
\textwidth=15.2cm
\textheight=22cm
\parindent=15pt
\oddsidemargin=0.5cm
\evensidemargin=0.5cm
\topmargin=-0.5cm

\usepackage{graphicx}
\allowdisplaybreaks
\begin{document}
\baselineskip08pt

\title [Unitary Extension Principle for Non-uniform Wavelet Frames]{Unitary Extension Principle for Nonuniform Wavelet Frames in $L^2(\mathbb{R})$}

\author[H. K. Malhotra]{ Hari Krishan Malhotra }
\address{ \textbf{Hari K. Malhotra}, Department of Mathematics,
University of Delhi, Delhi-110007, India.}
\email{krishan.hari67@gmail.com}

\author[L.  K. Vashisht]{  Lalit  Kumar  Vashisht}
\address{{\bf{Lalit  K. Vashisht}}, Department of Mathematics,
University of Delhi, Delhi-110007, India.}
\email{lalitkvashisht@gmail.com}
\begin{abstract}
 We study the construction of  nonuniform tight wavelet frames for the Lebesgue space $L^2(\mathbb{R})$, where the related translation set is not necessary a group.
The main  purpose of this paper is  to    prove  the unitary extension principle (UEP) and  the  oblique extension principle  (OEP) for construction of  multi-generated nonuniform tight wavelet frames for  $L^2(\mathbb{R})$.  Some examples are also given to illustrate the  results.
\end{abstract}

\subjclass[2010]{ 42C40; 42C15;  42C30; 42C05.}

\keywords{ Hilbert Frame,   Nonuniform wavelet system, Unitary extension principle.\\
	The research of Hari K. Malhotra is supported by  the University Grant Commission (UGC), India (Grant No.: 19/06/2016(i)EU-V).}

\maketitle

\baselineskip15pt

\section{Introduction}
 Wavelets have been extensively studied over the last few years and its role in both pure and applied mathematics is well known. It is not possible to give complete list of applications of wavelets, let us at least mention some \cite{Beno1, BN1, ID, DHRS, Bhan, Hern1, RBCDR, RZ3}, also see  many references therein. Wavelets in $L^2(\mathbb{R})$ are very efficient tools as it gives orthonormal basis for $L^2(\mathbb{R})$ in form of dilation and translation of finite numbers of function in $L^2(\mathbb{R})$  which is very simple and convenient form of basis for $L^2(\mathbb{R})$. Gabardo and Nashed \cite{GN1} considered a generalization of Mallat's classic multiresolution analysis (MRA), which  is based on the theory of spectral pairs.
\begin{defn}\cite[Definition 3.1]{GN1}\label{def1}
Let $N\geq 1$ be a positive integer  and $r$ be an odd integer  relatively prime to  $N$ such that $1\le r \le 2N-1$, an associated nonuniform
multiresolution analysis (abbreviated NUMRA) is a collection $ \{V_j\}_{j \in \mathbb{Z}}$  of closed subspaces of $L^2(\mathbb{R})$ satisfying the following properties:
\begin{enumerate}[$(i)$]
\item $V_j \subset V_{j+1}$ for all $j \in \mathbb{Z}$,
\item $\bigcup_{j \in \mathbb{Z}} V_j$ is dense in $L^2(\mathbb{R})$,
\item $\bigcap_{j \in \mathbb{Z}} V_j = \{0\}$,
\item $f(x) \in V_j$ if and only if $f(2Nx) \in V_{j+1}$,
\item There exists a function $\phi \in V_0$, called the \emph{scaling function}, such
that the collection \break $\{\phi(x - \lambda)\}_{\lambda \in \Lambda}$, where $\Lambda =\left\{0,\frac{r}{N}  \right\} +2\mathbb{Z}$, is a complete
orthonormal system for $V_0$.
\end{enumerate}
\end{defn}
Here, the translate set  $\Lambda =\left\{0,\frac{r}{N}  \right\} +2\mathbb{Z} $ may   not be a group. One may observe  that the standard definition of a one-dimensional multiresolution analysis with dilation factor equal to $2$ is a special case of  NUMRA given in Definition \ref{def1}. Related to the one-dimensional spectral pairs, Gabardo and  Yu \cite{GN2} considered sets of   nonuniform wavelets  in $L^2(\mathbb{R})$. For fundamental properties of nonuniform wavelets based on the  spectral pair, we refer to \cite{GN1,GN2, YuG}.

Ron and Shen \cite{Ron1} introduced the unitary extension principle which gives the construction of a multi-generated tight wavelet frame  for  $L^2(\mathbb{R}^d)$, based on a given refinable function. Tight wavelet frames gives more convenient way to represent a function in $L^2(\mathbb{R})$ in comparison of non-tight wavelet frames as in that case frame operator is constant multiple of identity operator in $L^2(\mathbb{R})$. Christensen and Goh in  \cite{OCG1} generalized  the unitary extension principle to  locally compact abelian groups. They gave general constructions, based on B-splines on the group itself as well as on characteristic functions on the dual group.  Motivated by the work of  Gabardo and Nashed \cite{GN1} for the construction of nonuniform wavelets, and application of frames in applied and pure mathematics, we study nonuniform wavelet frames for the Lebesgue space  $L^2(\mathbb{R})$. Notable contribution in the paper is   to introduce the  unitary extension principle for the construction of    multi-generated tight nonuniform wavelet frames of the form
\begin{align*}
\{\Psi_{j, \lambda, \ell} \}_{j\in \mathbb{Z},\lambda \in \Lambda \atop \ell =1,2,\dots,n} = \Big\{(2N)^{\frac{j}{2}}\psi_{1}(2N)^j\gamma-\lambda\Big\}_{j\in \mathbb{Z},\lambda \in \Lambda} \bigcup \cdots \bigcup \Big\{(2N)^{\frac{j}{2}}\psi_{n}(2N)^j\gamma-\lambda\Big\}_{j\in \mathbb{Z},\lambda \in \Lambda}
\end{align*}
in  $L^2(\mathbb{R})$.

\subsection{Overview and main results}
The paper  is  organized as follows. In Section  \ref{sec2},  we give  basic notations, definitions and properties of operators related with nonuniform wavelet frames in $L^2(\mathbb{R})$. The general setup for  nonuniform wavelet frame system  in $L^2(\mathbb{R})$  is  also given in Section  \ref{sec2}.  Section \ref{sec3}  gives some auxiliary  results  needed in the rest of the paper. The  main results are given in  Section~\ref{sec4}. Theorem \ref{thm4.1} gives the  unitary extension principle (UEP) for  the construction of    multi-generated tight nonuniform wavelet frames for $L^2(\mathbb{R})$. The  extended version of UEP (or oblique extension principle) for nonuniform wavelet frames  for $L^2(\mathbb{R})$ can be found in Theorem \ref{thm4.2}.   Some examples are given in  Section \ref{sec5} to illustrate our results.

\subsection{Relation to existing work  and motivation}
 Duffin and  Schaeffer \cite{DS} introduced  the concept of a frame  for separable  Hilbert spaces, while  addressing some difficult problems from the theory of nonharmonic analysis. Let $\mathcal{H}$ be an infinite dimensional  separable Hilbert space  with inner product $\langle.,.\rangle$. The  norm induced by the inner product $\langle.,.\rangle$ is given by $\|f\| = \sqrt{\langle f, f \rangle}$, $f \in \mathcal{H}$. A  family  $\{f_k\}_{k=1}^{\infty} \subset \mathcal{H}$  is called a \emph{frame}  for   $\mathcal{H}$,   if there exist  positive scalars $A_o \leq B_o < \infty$ such that  for all $  f \in \mathcal{H}$,
\begin{align}\label{eq1.1e}
A_o \|f\|^2\leq  \sum_{k=1}^{\infty}  |\langle f, f_k\rangle|^2 \leq B_o \|f\|^2.
\end{align}
The scalars $A_o$ and $B_o$ are called \emph{lower frame bound} and \emph{upper frame bound}, respectively.  If  it is possible to choose $A_o = B_o$, then we say that   $\{f_k\}_{k=1}^{\infty} $ is a  \emph{$A_o$-Parseval frame} (or \emph{$A_o$-tight frame}); and \emph{Parseval frame} if $A_o = B_o =1$. If only upper inequality in \eqref{eq1.1e} holds, then we say that  $\{f_k\}_{k=1}^{\infty} $ is   a \emph{Bessel sequence}  sequence  with \emph{Bessel bound} $B_o$.  If  $\{f_k\}_{k=1}^{\infty}$ is a frame for $\mathcal{H}$, then  $S : \mathcal{H}\rightarrow \mathcal{H}$ given  by $ S f = \sum_{k=1}^{\infty} \langle f, f_k\rangle f_k$   is  a    bounded, linear and invertible  on $\mathcal{H}$, and is called the frame operator. This gives the \emph{reconstruction formula} of each vector  $f \in \mathcal{H}$,
\begin{align*}
f = SS^{-1}f =\sum_{k=1}^{\infty} \langle S^{-1}f, f_k \rangle f_k.
\end{align*}
Thus, each vector has an explicit series  expansion which need not be unique.   For application of frames in both pure and applied mathematics,  we refer to book of  Casazza and Kutyniok  \cite{CK},  Christensen \cite{OC2} and Han \cite{Bhan}. Nowadays the theory of iterated function systems, quantum mechanics and wavelets is emerging in important applications in frame theory, see \cite{D, VD3,  RZ1}  and many references therein.
Very recent work on  discrete frames of translates and discrete wavelet frames, and their duals in  finite dimensional spaces can be found in  \cite{DLKV, DV2}.
Wavelet frames in $L^2(\mathbb{R})$ are also very powerful tool for representing functions  in $L^2(\mathbb{R})$ as sum of series of functions  which are dilation and translation of finite number of functions  in $L^2(\mathbb{R})$. It provides us convenient tool to expansion of functions in $L^2(\mathbb{R})$ of similar type as one that arise in orthonormal basis, however,  wavelet frame conditions are weaker that makes wavelet frame more flexible. Nonuniform wavelet frames could be used in signal processing, sampling theory, speech recognition and various other areas, where instead of integer shifts nonuniform shifts are needed. In \cite{SPman}, Sharma and Manchanda  gave necessary and sufficient conditions for   nonuniform wavelet frames in $L^2(\mathbb{R})$.

 Motivated by the work of Gabardo and Nashed \cite{GN1} and  Gabardo and  Yu \cite{GN2} in the study of  nonuniform wavelets, we study frame properties of nonuniform wavelets in the Lebesgue space $L^2(\mathbb{R})$.  We recall that the extension problems in frame theory has a long history. It is showed in \cite{CKKm} that the  extension problem has a solution in the sense that ``any Bessel sequence can be extended to a tight frame by adjoining a suitable family of vectors in the underlying space.''
Ron and Shen introduced unitary extension principle for construction of tight wavelet frames in the Lebesgue space $L^2(\mathbb{R}^d)$. The unitary extension principle  allows construction of tight wavelet frames with compact support, desired smoothness; and good approximation of functions.  In real life application all signals are not obtained from  uniform shifts; so there is a natural question  regarding  analysis and decompositions of  this types of signals by a stable mathematical tool.   Gabardo and Nashed \cite{GN1} and  Gabardo and  Yu \cite{GN2} filled this gap by  the concept of  nonuniform multiresolution analysis. In the direction of  construction of Parseval frames from nonuniform multiwavelets systems, we develop  a  general setup  and prove  the unitary extension principle  for construction of  multi-generated nonuniform tight wavelet frames for  $L^2(\mathbb{R})$. Ron and Shen \cite{Ron1} gave  the unitary extension principle, where  conditions for the construction of  multi-generated tight wavelet frames for the Lebesgue space  $L^2(\mathbb{R}^d)$  are based on a given refinable function.
%The conditions in our general set up and in the unitary extension principle are different than that given by Ron and Shen, see  Remark \ref{remd2.2f9} for  details.

\section{Preliminaries} \label{sec2}
 As is standard,  $\mathbb{Z}$, $\mathbb{N}$ and  $\mathbb{R}$ denote  the set of all  integers, positive integers and  real numbers, respectively.
Throughout the paper,  $N \in \mathbb{N}$, $r $ be an odd integer relative prime to $N$ such that $1~\le~r \le~2N-1$ and  $\Lambda =\left\{0,\frac{r}{N}  \right\} +2\mathbb{Z} $. Notice that the discrete set $\Lambda$ is not always a group.  The support of a function $\psi$ is  denoted by  Supp $\psi$, and defined as  Supp $\psi = \text{clo}\Big(\{x: \psi(x) \ne 0\}\Big)$. Symbol $\overline{z}$ denote the complex conjugate of a complex number $z$. The conjugate transpose of a matrix $H$ is denoted by $H^{*}$. The characteristic function of a set $E$ is denoted by $\chi_{E}$. The spaces $L^2(\mathbb{R})$ and $L^{\infty}(\mathbb{R})$  denote the  equivalence classes of square-integrable functions and essentially bounded functions on $\mathbb{R}$, respectively. Next, we recall the Parseval identity.  Let $\{e_k\}_{k \in \mathbb{Z}}$  be an orthonormal basis for a Hilbert space $\mathcal{H}$.  Then,
	\begin{align*}
	\sum \limits_{k\in \mathbb{Z}} |\langle f,e_k \rangle|^2=\|f\|^2,  \ f\in \mathcal{H} \quad \text{(Parseval identity)}.
	\end{align*}
For $a, b \in \mathbb{R}$, we consider the following operators on  $L^2({\mathbb{R}})$.
\begin{align*}
&  T_a:L^2({\mathbb{R}})\to L^2({\mathbb{R}}),  \quad T_af(\gamma)=f(\gamma-a)  \quad (\text{Translation by} \ a ),\\
&  E_b:L^2({\mathbb{R}})\to L^2({\mathbb{R}}), \quad  E_b f(\gamma)=e^{2\pi i b \gamma}f(\gamma) \quad (\text{Modulation by} \ b), \\
& L: L^2({\mathbb{R}})\to L^2({\mathbb{R}}), \quad Lf(\gamma)=\sqrt{2N}f(2N\gamma) \quad (\text{N-Dilation operator}).
\end{align*}
The $j$ fold $N$-dilation, where $j\in \mathbb{Z}$,  is given by
\begin{align*}
L^jf(\gamma)=(2N)^{\frac{j}{2}}f((2N)^j\gamma).
\end{align*}

\begin{defn}
Let $\{\psi_1,\psi_2,\dots,\psi_n\} \subset   L^2(\mathbb{R})$ be a finite set, where  $\psi_{\ell} \ne 0$, $1 \leq \ell \leq n$.   The family
\begin{align*}
\{L^jT_{\lambda}\psi_{\ell} \}_{j\in \mathbb{Z},\lambda \in \Lambda \atop \ell =1,2,\dots,n} = \Big\{(2N)^{\frac{j}{2}}\psi_{1}(2N)^j\gamma-\lambda\Big\}_{j\in \mathbb{Z},\lambda \in \Lambda} \bigcup \cdots \bigcup \Big\{(2N)^{\frac{j}{2}}\psi_{n}(2N)^j\gamma-\lambda\Big\}_{j\in \mathbb{Z},\lambda \in \Lambda}
\end{align*}
is called a  nonuniform wavelet frame  for $L^2(\mathbb{R})$,  if   there exist finite  positive   constants $A$ and $B$ such that
\begin{align*}
A\|f\|^2 \le  \sum_{j\in \mathbb{Z}} \sum_{\lambda \in \Lambda}\sum_{\ell=1}^{n}  |\langle f,L^jT_\lambda \psi_{\ell} \rangle|^2 \le B\|f\|^2 \ \text{for all} \ f \in L^2(\mathbb{R}).
\end{align*}
	
\end{defn}

The  Fourier transform of a function $f$ is denoted by $\mathcal{F}f \ \text{or}\  \hat{f}$, and  defined as
\begin{align*}
\mathcal{F}f=\hat{f}(\gamma)=\int\limits_{-\infty}^{\infty}f(x)e^{- 2\pi ix \gamma} dx.
\end{align*}

For  $N \in \mathbb{N}$, $j \in \mathbb{Z}$ and $a \in \mathbb{R}$, by direct calculation, we have the following properties.
\begin{enumerate}[$(i)$]
\item 	$L^j:L^2(\mathbb{R})\to L^2(\mathbb{R})$ is unitary map.
\item $L^jT_a =T_{(2N)^{-j}a}L^j$.
\item $\mathcal{F}L^j=L^{-j}\mathcal{F}$.
\item $\mathcal{F}T_a=E_{-a}\mathcal{F}$.
\end{enumerate}

\section{The Nonuniform General Setup}
In this section, we give  a  list of  assumptions which will be used in the construction of Parseval nonuniform wavelet frames. To be precise,  in formulation of the unitary extension principle there is long list of assumption, instead of writing each assumption again and again, we state all assumptions at once and call it \emph{nonuniform general setup}: Let $\psi_0\in L^2(\mathbb{R})$  be such that
\begin{enumerate}[$(i)$]
	\item $\hat{\psi_0}(2N\gamma)=H_0(\gamma)\hat{\psi_0}(\gamma), \  H_0(\gamma)\in L^{\infty}(\mathbb{R})$;
	\item Supp $ {\hat{\psi_0}}(\gamma)\subseteq [0,\frac{1}{4N}]$; and
	\item $\lim \limits_{\gamma \to 0^+}\hat{\psi_0}(\gamma)=1$.
\end{enumerate}
Further,  let $H_1,H_2,\dots,H_n \in L^{\infty}(\mathbb{R})$,  and define $\psi_1,\psi_2,\dots,\psi_n \in L^2(\mathbb{R})$ such that
\begin{align*}
\widehat{\psi}_{\ell}(2N\gamma)= H_{\ell} (\gamma)\hat{\psi_0}(\gamma), \   \ell = 1,2, \dots,  n.
\end{align*}
Let  $H(\gamma)$ be  a $(n+1) \times 1$ matrix given by
\begin{align*}
H(\gamma)=\begin{bmatrix}
H_0(\gamma)\\
H_1(\gamma)\\
\vdots\\
H_n(\gamma)
\end{bmatrix}_{(n+1) \times 1}.
\end{align*}
Then, the  collection  $\{\psi_{\ell}, H_{\ell}\}_{\ell=0}^n$ is called  the nonuniform  general setup.

\section{Some Auxiliary Results}\label{sec3}
In this section, we give some  auxiliary results that will be used in the sequel.
\begin{lem}\label{lem00}
For any  $f \in L^1(\mathbb{R})$, the function $\mathcal{S}f (\gamma)=\sum\limits_{k\in \mathbb{Z}} f(\gamma + Nk)$ is well defined, $N$-periodic and belongs to $L^1(0,N)$.
\end{lem}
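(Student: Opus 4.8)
The statement concerns the periodization operator $\mathcal{S}f(\gamma) = \sum_{k \in \mathbb{Z}} f(\gamma + Nk)$ for $f \in L^1(\mathbb{R})$. This is the standard periodization lemma (usually stated with period $1$), and the proof here should simply adapt that argument to period $N$. The three assertions to establish are: (a) the series converges absolutely for almost every $\gamma$, so $\mathcal{S}f$ is well defined a.e.; (b) $\mathcal{S}f$ is $N$-periodic; and (c) $\mathcal{S}f \in L^1(0,N)$ with $\int_0^N |\mathcal{S}f(\gamma)|\,d\gamma \le \|f\|_{L^1(\mathbb{R})}$.

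First I would handle well-definedness and integrability together via Tonelli's theorem. Consider the nonnegative measurable function $g(\gamma) = \sum_{k \in \mathbb{Z}} |f(\gamma + Nk)|$ on $(0,N)$. By the monotone convergence theorem (or Tonelli applied to the counting measure on $\mathbb{Z}$ times Lebesgue measure on $(0,N)$),
\begin{align*}
\int_0^N g(\gamma)\,d\gamma = \int_0^N \sum_{k \in \mathbb{Z}} |f(\gamma + Nk)|\,d\gamma = \sum_{k \in \mathbb{Z}} \int_0^N |f(\gamma + Nk)|\,d\gamma = \sum_{k \in \mathbb{Z}} \int_{Nk}^{N(k+1)} |f(t)|\,dt = \|f\|_{L^1(\mathbb{R})} < \infty,
\end{align*}
using the substitution $t = \gamma + Nk$ and the fact that the intervals $[Nk, N(k+1))$ tile $\mathbb{R}$. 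Since $g \in L^1(0,N)$, it is finite a.e. on $(0,N)$, which means $\sum_{k} f(\gamma + Nk)$ converges absolutely for a.e. $\gamma \in (0,N)$; hence $\mathcal{S}f(\gamma)$ is well defined a.e. there, and $|\mathcal{S}f(\gamma)| \le g(\gamma)$ gives $\mathcal{S}f \in L^1(0,N)$ with the stated bound.

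Next I would verify $N$-periodicity: for $\gamma$ in the full-measure set where the series converges absolutely, a relabeling of the index ($k \mapsto k-1$) gives $\mathcal{S}f(\gamma + N) = \sum_{k \in \mathbb{Z}} f(\gamma + N + Nk) = \sum_{k \in \mathbb{Z}} f(\gamma + Nk) = \mathcal{S}f(\gamma)$; absolute convergence justifies the reindexing. This also shows that the a.e.-defined function extends consistently from $(0,N)$ to all of $\mathbb{R}$ as an $N$-periodic $L^1_{\mathrm{loc}}$ function. The only mild subtlety — and the one point deserving care rather than a genuine obstacle — is the standard bookkeeping that "defined a.e. on $(0,N)$" plus periodicity yields a well-defined $N$-periodic function a.e. on $\mathbb{R}$; this is routine. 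I expect no real difficulty here: the entire argument is Tonelli plus a change of variables, and the main thing to be careful about is writing the interval decomposition $\mathbb{R} = \bigsqcup_{k \in \mathbb{Z}} [Nk, N(k+1))$ correctly with the factor $N$.
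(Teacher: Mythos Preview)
Your proposal is correct and follows essentially the same approach as the paper: use Tonelli (interchange of sum and integral) together with the tiling $\mathbb{R}=\bigsqcup_{k\in\mathbb{Z}}[Nk,N(k+1))$ to show $\int_0^N \sum_k |f(\gamma+Nk)|\,d\gamma = \|f\|_{L^1(\mathbb{R})}<\infty$, whence the series converges absolutely a.e.\ and lies in $L^1(0,N)$, with $N$-periodicity following by reindexing. The paper's proof is simply a more terse version of exactly this argument.
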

\begin{proof}
It is clear that $\mathcal{S}f (\gamma)=\sum\limits_{k\in \mathbb{Z}} f(\gamma + Nk)$ is $N$-periodic.
For any  $f \in L^1(\mathbb{R})$, we have
\begin{align*}
\int\limits_{0}^{N} \sum\limits_{k\in \mathbb{Z}}|f(\gamma +Nk)|d\,\gamma=\int\limits_{\mathbb{R}} |f(\gamma)|\, d\gamma <\infty.
\end{align*}
Thus, $\mathcal{S}f (\gamma)$ is well defined a.e. on $\mathbb{R}$, and also  belongs to $L^1(0,N)$.
\end{proof}

\begin{lem}\label{lem2.1h}
Assume that
\begin{enumerate}[$(i)$]
\item  $\psi_0 \in L^2\mathbb{(R)}$,  $\lim\limits_{\gamma \to 0^+} \hat{\psi_0}(\gamma) =1$  and  Supp $\hat{\psi_0} (\gamma)$ $\subseteq [0,\frac{1}{N}]$;
\item   $f \in L^2 \mathbb{(R)}$  such that $\hat{f} \in C_c{\mathbb{(R)}}$.
\end{enumerate}
  Then,  for any $\epsilon > 0 $ there exist $J \in \mathbb{Z}$ such that
\begin{align*}
(1-\epsilon)\|f\|^2 \le\sum\limits_{\lambda \in \Lambda} |\langle f,L^jT_\lambda \psi_0 \rangle|^2 \le(1+\epsilon) \|f\|^2 \ \text{for all} \  j \ge J.
\end{align*}
\end{lem}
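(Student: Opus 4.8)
The plan is to compute the quantity $\sum_{\lambda \in \Lambda} |\langle f, L^j T_\lambda \psi_0\rangle|^2$ on the Fourier side and show it converges to $\|f\|^2$ as $j \to \infty$. First I would write $\langle f, L^j T_\lambda \psi_0\rangle = \langle \hat f, \mathcal{F}L^j T_\lambda \psi_0\rangle$ using Parseval, and then use the operator identities $(ii)$, $(iii)$, $(iv)$ from the preliminaries to move the dilation and translation onto $\hat{\psi_0}$; one gets something of the form $\langle \hat f, L^{-j} E_{-(2N)^{-j}\lambda} \hat{\psi_0}\rangle$, i.e. an integral $\int \hat f(\gamma) \overline{(2N)^{-j/2}\hat{\psi_0}((2N)^{-j}\gamma)} e^{2\pi i (2N)^{-j}\lambda\gamma}\, d\gamma$. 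The key point is that $\Lambda = \{0, r/N\} + 2\mathbb{Z}$, so the exponentials $\{e^{2\pi i (2N)^{-j}\lambda \gamma}\}_{\lambda \in \Lambda}$ split into two families indexed by $2\mathbb{Z}$ (shifted by $0$ and by $r/N$), each of which is — after rescaling the variable — an orthonormal-type system of exponentials with period $(2N)^{j+1}$ (since the $2\mathbb{Z}$ part gives period $(2N)^j/2$ in $\gamma$... I need to track the constants carefully, but the period is the crucial object).

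Next I would fix $j$ large enough that the support of $\hat{\psi_0}((2N)^{-j}\gamma)$, which is contained in $(2N)^j \cdot [0, \tfrac{1}{N}] = [0, \tfrac{(2N)^j}{N}]$, together with the compact support of $\hat f$, fits inside one period of the relevant exponential system; this is where $J$ comes from, and it is exactly the mechanism already used in classical UEP proofs (Christensen's book, the proof that the wavelet system at fine scales recovers $\|f\|^2$). Once everything lives in a single period, I apply the Parseval identity for the orthonormal exponential basis on that interval (Lemma~\ref{lem00}-style periodization can be invoked to justify interchanging sum and integral, since $\hat f \in C_c$ and $\hat{\psi_0} \in L^2$ with bounded support make the relevant function $L^1$). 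Summing the contributions of the two cosets of $2\mathbb{Z}$ in $\Lambda$, the cross terms and the shift by $r/N$ only contribute phase factors that cancel in modulus, and one is left with
\begin{align*}
\sum_{\lambda \in \Lambda} |\langle f, L^j T_\lambda \psi_0\rangle|^2 = \int_{\mathbb{R}} |\hat f(\gamma)|^2\, |\hat{\psi_0}((2N)^{-j}\gamma)|^2\, d\gamma
\end{align*}
up to the correct normalizing constant (the factor $2$ coming from $\Lambda$ versus $2\mathbb{Z}$ should match the $2N$-dilation normalization).

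Finally I would let $j \to \infty$: since $\lim_{\gamma \to 0^+}\hat{\psi_0}(\gamma) = 1$ and $\hat f$ has compact support, $|\hat{\psi_0}((2N)^{-j}\gamma)|^2 \to 1$ pointwise on the support of $\hat f$ (for $\gamma > 0$; the behavior near $\gamma = 0$ and for $\gamma < 0$ must be checked, but $\hat{\psi_0}$ supported in $[0,\tfrac{1}{N}]$ forces the negative part to vanish — here I would need $\hat f$ supported in $(0,\infty)$, or more care, which suggests the statement is really being applied to such $f$ or that one argues the negative-frequency contribution separately). By dominated convergence (dominating function $\|\hat{\psi_0}\|_\infty^2 |\hat f(\gamma)|^2 \chi_{\mathrm{supp}\hat f}$, using $\hat{\psi_0} \in L^\infty$ near $0$ which follows from the limit condition plus measurability), the integral converges to $\int |\hat f(\gamma)|^2 d\gamma = \|f\|^2$. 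Hence for any $\epsilon > 0$ there is $J$ with the claimed two-sided bound for all $j \ge J$. The main obstacle is bookkeeping: getting the dilation/translation constants and the periodization interval exactly right so that the "single period" argument applies, and handling the coset structure of $\Lambda$ (the shift by $r/N$) correctly — this is what distinguishes the nonuniform case from the classical one, though because $r/N$ enters only through a unimodular phase it ultimately does not change the final identity.
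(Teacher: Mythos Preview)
Your plan is essentially the paper's proof, up to one bookkeeping choice: the paper applies the unitary $L^j$ to $\hat f$ rather than to $\hat{\psi_0}$, writing $\langle f,L^jT_\lambda\psi_0\rangle=\langle L^j\hat f,E_{-\lambda}\hat{\psi_0}\rangle$, so that the exponential system and its period are fixed (Parseval on $L^2(0,\tfrac12)$ with $\{\sqrt2\,e^{2\pi i(2m)\gamma}\}$) and the identity
\[
\sum_{\lambda\in\Lambda}|\langle f,L^jT_\lambda\psi_0\rangle|^2=\int_0^{1/2}\bigl|(L^j\hat f)(\gamma)\,\hat{\psi_0}(\gamma)\bigr|^2\,d\gamma
\]
holds for \emph{every} $j$; the ``$j\ge J$'' is used only at the final step, to make $|\hat{\psi_0}|^2$ close to $1$ on the shrinking support of $L^j\hat f$. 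This avoids the moving-period bookkeeping you were worried about, and is exactly the change of variable $\gamma\mapsto(2N)^j\gamma$ of your formula.

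Your caution about $\gamma\le 0$ is well taken and is not a defect of your argument: the paper's own proof integrates only over $[0,\tfrac12]$ (since $\hat{\psi_0}$ is supported there) yet still concludes $(1\pm\epsilon)\|\hat f\|^2$, which is only correct for $\hat f$ supported in $[0,\infty)$. Under the stated support hypothesis on $\hat{\psi_0}$ the entire wavelet system lives in the Hardy-type subspace $\{f:\operatorname{supp}\hat f\subseteq[0,\infty)\}$, so the lemma (and the downstream UEP) should really be read for such $f$; the paper glosses over this point.
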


\begin{proof}
For any  $j \in \mathbb{Z}$, $(L^j\hat{f})\bar{\hat{\psi_0}}\in L^1(\mathbb{R})$. Therefore,   by Lemma \ref{lem00}, the function  $ \mathcal{S}(L^j\hat{f})\bar{\hat{\psi_0}}$ is well defined.
 Further, for $\gamma \in [0,N]$, we have
	\begin{align*}
\mathcal{S}(L^j\hat{f})\bar{\hat{\psi_0}} &= \sum\limits_{k\in \mathbb{Z}}((L^j\hat{f})\bar{\hat{\psi_0}})(\gamma-Nk)\\
&=\sum\limits_{k\in \mathbb{Z}} (L^j\hat{f})(\gamma-Nk) \bar{\hat{\psi_0}}(\gamma-Nk).
\end{align*}
Thus,   $ \mathcal{S}(L^j\hat{f})\bar{\hat{\psi_0}}$ is bounded by finite linear combinations of translates of $\bar{\hat{\psi_0}}$	and  \break $ \mathcal{S}(L^j\hat{f})\bar{\hat{\psi_0}} \in L^2[0,N]$.

Note that
\begin{align*}
\langle f,L^jT_\lambda \psi_0 \rangle =\langle \widehat{f},\widehat{L^jT_\lambda \psi_0} \rangle
 =\langle\widehat{f},L^{-j}E_{-\lambda}\hat{\psi_0}\rangle
 =\langle L^j\hat{f},E_{-\lambda}\hat{\psi_0} \rangle.
\end{align*}
Using  Supp $\hat{\psi}_0(\gamma)\subseteq\left[0,\frac{1}{4N}\right]$ and $\frac{1}{4N}<\frac{1}{2}$, we compute
\begin{align}\label{eqrf1}
&\sum\limits_{\lambda \in \Lambda}	|\langle f,L^jT_\lambda \psi_0 \rangle|^2 \notag\\
 &=\sum\limits_{\lambda \in \Lambda}	|\langle L^j\hat{f},E_{-\lambda}\hat{\psi_0} \rangle  |^2 \notag\\
&=\sum\limits_{\lambda\in 2\mathbb{Z}}	|\langle L^j\hat{f},E_{-\lambda}\hat{\psi_0} \rangle  |^2 +\sum\limits_{\lambda \in ( \frac{r}{N}+2\mathbb{Z})}	|\langle L^j\hat{f},E_{-\lambda}\hat{\psi_0} \rangle  |^2 \notag\\
&=\sum\limits_{m\in\mathbb{Z}} \Big| \int\limits_{0}^{N}\mathcal{S}((L^j\hat{f})  \bar{\hat{\psi_0}} )(\gamma) e^{2\pi i(2m)\gamma} \,d\gamma  \Big|^2 +\sum\limits_{m\in\mathbb{Z}} \Big| \int\limits_{0}^{N}\mathcal{S}((L^j\hat{f})  \bar{\hat{\psi_0}} )(\gamma) e^{2\pi i(\frac{r}{N}+2m)\gamma} \,d\gamma  \Big|^2 \notag \\
&=\sum\limits_{m\in\mathbb{Z}} \Big| \int\limits_{0}^{\frac{1}{4N}}((L^j\hat{f})  \bar{\hat{\psi_0}} ) e^{2\pi i(2m)\gamma} \,d\gamma  \Big|^2 +\sum\limits_{m\in\mathbb{Z}} \Big| \int\limits_{0}^{\frac{1}{4N}}((L^j\hat{f})  \bar{\hat{\psi_0}} ) e^{2\pi i(\frac{r}{N}+2m)\gamma} \,d\gamma  \Big|^2 \notag\\
&=\sum\limits_{m\in\mathbb{Z}} \Big| \int\limits_{0}^{\frac{1}{2}}((L^j\hat{f})  \bar{\hat{\psi_0}} ) e^{2\pi i(2m)\gamma} \,d\gamma  \Big|^2 +\sum\limits_{m\in\mathbb{Z}} \Big| \int\limits_{0}^{\frac{1}{2}}((L^j\hat{f})  \bar{\hat{\psi_0}} ) e^{2\pi i(\frac{r}{N}+2m)\gamma} \,d\gamma  \Big|^2
\end{align}
Applying the Parseval identity  on $L^2(0,\frac{1}{2})$ with respect to an orthonormal bases
$\{\sqrt{2}e^{2 \pi i(2m)\gamma}\}$  in \eqref{eqrf1}, we obtain
\begin{align}\label{eqrev1}
\sum\limits_{\lambda \in \Lambda}	|\langle f,L^jT_\lambda \psi_0 \rangle|^2
&=\frac{1}{2}\int\limits_{0}^{\frac{1}{2}}|(L^j\hat{f})  \bar{\hat{\psi_0}}|^2\,d\gamma +\frac{1}{2}\int\limits_{0}^{\frac{1}{2}}|(L^j\hat{f})  \bar{\hat{\psi_0}}|^2\,d\gamma	\notag\\
&=\int\limits_{0}^{\frac{1}{2}}|(L^j\hat{f})  \bar{\hat{\psi_0}}|^2\,d\gamma.
\end{align}

Let $\epsilon >0$ be given. Since $\hat{\psi_0}(\gamma) \to 1$ as $\gamma \to 0^+ $, we can choose  $b \in ]0,\frac{1}{2}[ $ so that
\begin{equation}\label{eq1f}
(1-\epsilon)\le |\hat{\psi_0}(\gamma)|^2 \le (1+\epsilon),  \  \text{where} \  0<\gamma<b.
\end{equation}

Choose $J \in \mathbb{Z}$ large enough,  so that Supp $(L^j\hat{f}) \subset [-b,b]$ for all $j \geq J$. Then, by \eqref{eqrev1}, we have
\begin{equation}\label{eq2f}
\sum\limits_{\lambda \in \Lambda}	|\langle f,L^jT_\lambda \psi_0 \rangle|^2=
\int\limits_{0}^{b}|(L^j\hat{f})  \bar{\hat{\psi_0}}|^2\,d\gamma \  \text{for all} \ j \geq J.
\end{equation}

By \eqref{eq1f}, \eqref{eq2f}   and the fact that $L^j$ is unitary map, we have
\begin{align*}
(1-\epsilon)\|\hat{f}\|^2 \le \sum\limits_{\lambda \in \Lambda}	|\langle f,L^jT_\lambda \psi_0 \rangle|^2 \le (1+\epsilon)\|\hat{f}\|^2 \ \text{ for all} \  j \ge J.
\end{align*}
 Since the  Fourier transform is  unitary map, we get
\begin{align*}
(1-\epsilon)\|{f}\|^2 \le \sum\limits_{\lambda \in \Lambda}	|\langle f,L^jT_\lambda \psi_0 \rangle|^2 \le (1+\epsilon)\|{f}\|^2 \ \text{ for all} \  j \ge J.
\end{align*}
This concludes the proof.
\end{proof}

\begin{lem}\label{lem2.2}
Suppose that
\begin{enumerate}[$(i)$]
\item  $\psi_0 \in L^2(\mathbb{R})$ satisfies  Supp $\hat{\psi_0} \subseteq [0,\frac{1}{4N}]$ and
$\hat{\psi_0}(2N\gamma)=H_0(\gamma)\hat{\psi_0}(\gamma)$,where $H_0(\gamma) \in L^\infty (\mathbb{R})$;
\item  $f \in L^2(\mathbb{R})$ with $\hat{f} \in C_c(\mathbb{R})$,  and $ H_1,H_2,\dots,H_n \in  L^\infty (\mathbb{R})$  such that  the $(n+1)\times 1$ matrix
$$H(\gamma)=
\begin{bmatrix}
H_0(\gamma)\\
H_1(\gamma)\\
\vdots\\
H_n(\gamma)
\end{bmatrix}_{(n+1)\times 1}$$
satisfies ${H(\gamma)}^\ast H(\gamma)=1$ $a.e.$;
\item $\psi_1,\psi_2, \dots \psi_n \in L^2(\mathbb{R})$ such that $\hat{\psi}_{\ell}(2N\gamma)=H_{\ell}(\gamma)\hat{\psi_0}(\gamma),  \  \ell=1,2,\dots n$.
\end{enumerate}
Then
\begin{align*}
\sum \limits_{\ell=0}^n\sum\limits_{\lambda \in \Lambda} |\langle f,L^{j-1}T_\lambda \psi_{\ell} \rangle|^2 =\sum\limits_{\lambda \in \Lambda} |\langle f,L^jT_\lambda \psi_0 \rangle|^2.
\end{align*}
\end{lem}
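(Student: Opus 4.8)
The plan is to compute the right-hand side $\sum_{\lambda\in\Lambda}|\langle f, L^jT_\lambda\psi_0\rangle|^2$ by the same Fourier-space manipulation used in Lemma~\ref{lem2.1h}, but this time keeping the dilation factor $2N$ visible rather than immediately collapsing the support. First I would pass to the Fourier side: using the identities $\mathcal{F}L^j=L^{-j}\mathcal{F}$ and $\mathcal{F}T_a=E_{-a}\mathcal{F}$, we get $\langle f,L^jT_\lambda\psi_0\rangle=\langle L^j\hat f, E_{-\lambda}\hat\psi_0\rangle$. Splitting $\Lambda=2\mathbb{Z}\cup(\tfrac{r}{N}+2\mathbb{Z})$ and applying Lemma~\ref{lem00} together with the Parseval identity on $L^2(0,\tfrac12)$ (exactly as in \eqref{eqrf1}--\eqref{eqrev1}), the right-hand side equals $\int_0^{1/2}|(L^j\hat f)\,\overline{\hat\psi_0}|^2\,d\gamma$, and since $\mathrm{Supp}\,\hat\psi_0\subseteq[0,\tfrac{1}{4N}]$ this is the same as $\int_{\mathbb{R}}|(L^j\hat f)(\gamma)|^2|\hat\psi_0(\gamma)|^2\,d\gamma$.

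Next I would treat the left-hand side. For each fixed $\ell$ one shifts the dilation index: $\langle f,L^{j-1}T_\lambda\psi_\ell\rangle=\langle L^{j-1}\hat f, E_{-\lambda}\hat\psi_\ell\rangle$, and now I substitute the refinement relations $\hat\psi_\ell(2N\gamma)=H_\ell(\gamma)\hat\psi_0(\gamma)$. The key computation is to change variables $\gamma\mapsto 2N\gamma$ inside the inner product so that $\hat\psi_\ell$ is expressed through $H_\ell\hat\psi_0$, and to recognize that $L^{j-1}\hat f$ evaluated after this scaling becomes (a constant multiple of) $L^j\hat f$. Then, applying Lemma~\ref{lem00} and the Parseval identity again — this time on an interval of length $\tfrac12$ after rescaling, with the two cosets of $\Lambda$ contributing the modulations $e^{2\pi i(2m)\gamma}$ and $e^{2\pi i(\tfrac{r}{N}+2m)\gamma}$ — each summand $\sum_{\lambda\in\Lambda}|\langle f,L^{j-1}T_\lambda\psi_\ell\rangle|^2$ collapses to an integral of the form $\int |(L^j\hat f)(\gamma)|^2|H_\ell(\gamma/2N)|^2|\hat\psi_0(\gamma/2N)|^2$ over the relevant support, i.e. effectively $\int|(L^j\hat f)(\gamma)|^2|H_\ell(\gamma)|^2|\hat\psi_0(\gamma)|^2$ after tracking the scaling carefully on the support $[0,\tfrac{1}{4N}]$.

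Finally I would sum over $\ell=0,1,\dots,n$ and pull the sum inside the integral:
\[
\sum_{\ell=0}^n\sum_{\lambda\in\Lambda}|\langle f,L^{j-1}T_\lambda\psi_\ell\rangle|^2
=\int |(L^j\hat f)(\gamma)|^2\,|\hat\psi_0(\gamma)|^2\sum_{\ell=0}^n|H_\ell(\gamma)|^2\,d\gamma.
\]
Since hypothesis $(ii)$ gives $H(\gamma)^\ast H(\gamma)=\sum_{\ell=0}^n|H_\ell(\gamma)|^2=1$ a.e., the bracketed sum is $1$ wherever $\hat\psi_0$ is supported, and the integral reduces to $\int|(L^j\hat f)(\gamma)|^2|\hat\psi_0(\gamma)|^2\,d\gamma$, which is exactly the right-hand side computed above. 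The assumption $\hat f\in C_c(\mathbb{R})$ together with $H_\ell\in L^\infty$ and $\mathrm{Supp}\,\hat\psi_0\subseteq[0,\tfrac{1}{4N}]$ guarantees all the interchanges of sum and integral and the application of Lemma~\ref{lem00} are legitimate (the relevant $L^1$ functions genuinely lie in $L^1$, and the periodizations are in $L^2$ of the fundamental domain). The main obstacle I anticipate is bookkeeping the scaling: making sure that after the change of variables the support condition $[0,\tfrac{1}{4N}]$ is what lets one replace the interval $(0,N)$ (or $(0,\tfrac12)$ after rescaling) by the small interval without losing or double-counting the contributions of the two cosets of $\Lambda$, and that the factors of $2N$ from the dilation and from the change of variables cancel correctly so that no spurious constant survives.
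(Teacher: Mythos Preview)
Your proposal is correct and follows essentially the same route as the paper: compute both sides on the Fourier side, split $\Lambda$ into its two cosets, apply Parseval, and reduce each side to $\int_0^{1/(4N)}|(L^j\hat f)(\gamma)|^2|\hat\psi_0(\gamma)|^2\,d\gamma$, using $\sum_\ell|H_\ell(\gamma)|^2=1$ on the left. The only cosmetic difference is that the paper reaches the integrand $(L^j\hat f)\,\overline{H_\ell}\,\overline{\hat\psi_0}$ for the left-hand side via the operator identity $L^{j-1}T_\lambda=T_{(2N)\lambda}L^{j-1}$ (on the Fourier side) rather than an explicit substitution, and then applies Parseval on $L^2\bigl(0,\tfrac{1}{4N}\bigr)$ with the orthonormal basis $\{2\sqrt{N}\,e^{2\pi i(4Nm)\gamma}\}_{m\in\mathbb{Z}}$---so your ``interval of length $\tfrac12$ after rescaling'' should in fact be the interval $\bigl[0,\tfrac{1}{4N}\bigr]$, which resolves the bookkeeping concern you flagged.
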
	

\begin{proof}
For any  $ j\in \mathbb{Z}$ and for any  $\ell=0,1,\dots n$, we  have
\begin{align}
 \langle f,L^{j-1}T_\lambda \psi_{\ell} \rangle &=\langle L^{-j}f,L^{-1}T_\lambda \psi_{\ell} \rangle \notag\\
&=\langle L^{-j}f,T_{(2N)\lambda}L^{-1}\psi_{\ell} \rangle \notag\\
&=\langle L^j\hat{f},E_{-(2N)\lambda}L\hat\psi_\ell\rangle \notag\\
&=\int \limits_{\mathbb{R}}(L^j\hat{f})(\gamma)\sqrt{2N}\overline{\hat{\psi_{\ell}}(2N\gamma)}e^{2\pi i(2N\lambda)\gamma} d\gamma \notag\\
&=\sqrt{2N}\int \limits_{\mathbb{R}}(L^j\hat{f})(\gamma)\overline{H_{\ell}(\gamma)}\overline{\hat{\psi_0}(\gamma)}e^{2\pi i(2N\lambda)\gamma} d\gamma. \label{eq3.4rev}
\end{align}
Using Supp $\hat{\psi_0}\subseteq [0,\frac{1}{4N}]$, and  Parseval identity  on $ L^2(0,\frac{1}{4N})$ with respect to orthonormal basis $\{2\sqrt{N} e^{2\pi i (4Nm)\gamma}\}_{m \in \mathbb{Z}}$, we have
\begin{align}\label{eq3.5extr}
\sum\limits_{\lambda \in \Lambda} |\langle f,L^{j-1}T_\lambda \psi_{\ell} \rangle|^2 &=\sum\limits_{\lambda \in 2\mathbb{Z}} |\langle f,L^{j-1}T_\lambda \psi_{\ell} \rangle|^2+\sum\limits_{\lambda \in (\frac{r}{N}+2\mathbb{Z})} |\langle f,L^{j-1}T_\lambda \psi_{\ell} \rangle|^2 \notag\\
&=\sum\limits_{m\in\mathbb{Z}} \Big|\sqrt{2N} \int\limits_{0}^{N}\mathcal{S}((L^j\hat{f})\overline{H_{\ell}}\overline{\hat{\psi_0}})(\gamma)e^{2\pi i(2N)(2m)\gamma}\, d\gamma  \Big|^2 \notag \\
& \quad +\sum\limits_{m\in\mathbb{Z}} \Big|\sqrt{2N} \int\limits_{0}^{N}\mathcal{S}((L^j\hat{f})\overline{H_{\ell}}\overline{\hat{\psi_0}})(\gamma)e^{2\pi i(2N)(\frac{r}{N}+2m)\gamma}\, d\gamma  \Big|^2 \quad \text{(using \eqref{eq3.4rev})} \notag\\
&=\frac{1}{2}\sum\limits_{m\in\mathbb{Z}} \Big| \int\limits_{0}^{\frac{1}{4N}}(L^j\hat{f})(\gamma)\overline{H_{\ell}(\gamma)}\overline{\hat{\psi_0}(\gamma)}e^{2\pi i(4Nm)\gamma}2 \sqrt{N}\, d\gamma  \Big|^2 \notag\\
& \quad +\frac{1}{2}\sum\limits_{m\in\mathbb{Z}} \Big| \int\limits_{0}^{\frac{1}{4N}}(L^j\hat{f})(\gamma)\overline{H_{\ell}(\gamma)}\overline{\hat{\psi_0}(\gamma)}e^{2\pi i(2r)\gamma} e^{2 \pi i(4Nm)\gamma} 2 \sqrt{N}\, d\gamma  \Big|^2 \notag\\
&=\frac{1}{2}\int\limits_{0}^{\frac{1}{4N}}|(L^j\hat f)(\gamma)\overline{H_\ell(\gamma)}\overline{\hat{\psi_0}(\gamma)}|^2 d\gamma+\frac{1}{2}\int\limits_{0}^{\frac{1}{4N}}|(L^j\hat f)(\gamma)\overline{H_\ell(\gamma)}\overline{\hat{\psi_0}(\gamma)}|^2 d\gamma \notag\\
&=\int\limits_{0}^{\frac{1}{4N}}|(L^j\hat f)(\gamma)\overline{H_\ell(\gamma)}\overline{\hat{\psi_0}(\gamma)}|^2 d\gamma.
\end{align}

Since $H(\gamma)^\ast H(\gamma)=1$ a.e., so $H(\gamma)$ could be consider as an isometry from $\mathbb{C}^{1}$ into $\mathbb{C}^{n+1}$. Using \eqref{eq3.5extr}, we have
\begin{align}
\sum \limits_{\ell=0}^n\sum\limits_{\lambda \in \Lambda} |\langle f,L^{j-1}T_\lambda \psi_{\ell} \rangle|^2 & =\sum \limits_{\ell=0}^n   \int\limits_{0}^{\frac{1}{4N}}|(L^j\hat f)(\gamma)\overline{H_\ell(\gamma)}\overline{\hat{\psi_0}(\gamma)}|^2 d\gamma \notag\\
&=\displaystyle{\int \limits_{0}^{\frac{1}{4N}}\left\|\begin{bmatrix}
\overline{H_0(\gamma)}\notag\\
\vdots\\
%\dots\\
\overline{H_n(\gamma)}
\end{bmatrix}_{(n+1) \times 1} [(L^j\hat{f})\overline{\hat{\psi_0}}]_{1\times 1}\right\|^2_{\mathbb{C}^{n+1}}}d\gamma \notag\\
&=\int\limits_{0}^{\frac{1}{4N}}\left\|\overline{H(\gamma)}_{(n+1) \times1}[(L^j\hat{f})\overline{\hat{\psi_0}}]_{1\times 1}\right\|^2_{\mathbb{C}^{n+1}}d\gamma \notag\\
&=\int \limits_{0}^{\frac{1}{4N}}|(L^j\hat{f})(\gamma)\overline{\hat{\psi_0}}(\gamma)|^2\, d\gamma \label{A}.
\end{align}
Also
\begin{align}\label{eq3.99xv}
&\sum\limits_{\lambda \in \Lambda} |\langle f,L^{j}T_\lambda \psi_0 \rangle|^2 \notag\\
 &=\sum\limits_{\lambda \in 2\mathbb{Z}} |\langle f,L^{j}T_\lambda \psi_0 \rangle|^2+\sum\limits_{\lambda \in (\frac{r}{N}+2\mathbb{Z})} |\langle f,L^{j}T_\lambda \psi_0 \rangle|^2 \notag\\
&=\sum\limits_{m\in\mathbb{Z}} \Big| \int\limits_{\mathbb{R}}(L^j\hat{f})(\gamma)\overline{\hat{\psi_0}(\gamma)}e^{2\pi i(2m)\gamma}\, d\gamma  \Big|^2 +\sum\limits_{m\in\mathbb{Z}} \Big| \int\limits_{\mathbb{R}}(L^j\hat{f})(\gamma)\overline{\hat{\psi_0}(\gamma)}e^{2\pi i(\frac{r}{N}+2m)\gamma}\, d\gamma  \Big|^2.
\end{align}
Using Supp $\hat{\psi_0} \subseteq [0,\frac{1}{4N}]$, $\frac{1}{4N} <\frac{1}{2}$ and  applying the  Parseval formula on $L^2(0,\frac{1}{2})$ with respect to orthonormal basis $\{\sqrt{2}e^{2\pi i (2m)\gamma}\}_{m\in \mathbb{Z}}$, we compute
\begin{align}
&\sum\limits_{\lambda \in \Lambda} |\langle f,L^{j}T_\lambda \psi_0 \rangle|^2 \notag\\
&=\sum\limits_{m\in\mathbb{Z}} \Big| \int\limits_{0}^{N}\mathcal{S}((L^j\hat{f})\overline{\hat{\psi_0})}(\gamma)e^{2\pi i(2m)\gamma}\, d\gamma  \Big|^2 +\sum\limits_{m\in\mathbb{Z}} \Big| \int\limits_{0}^{N}\mathcal{S}((L^j\hat{f})\overline{\hat{\psi_0})}(\gamma)e^{2\pi i(\frac{r}{N}+2m)\gamma}\, d\gamma  \Big|^2 \quad \Big(\text{by} \ \eqref{eq3.99xv}\Big) \notag\\
&=\frac{1}{2}\sum\limits_{m\in\mathbb{Z}} \Big| \int\limits_{0}^{\frac{1}{2}}(L^j\hat{f})(\gamma)\overline{\hat{\psi_0}}(\gamma)\sqrt{2}e^{2\pi i(2m)\gamma}\, d\gamma  \Big|^2+\frac{1}{2} \sum\limits_{m\in\mathbb{Z}} \Big| \int\limits_{0}^{\frac{1}{2}}(L^j\hat{f})(\gamma)\overline{\hat{\psi_0}}(\gamma)\sqrt{2}e^{2\pi i(\frac{r}{N}+2m)\gamma}\, d\gamma  \Big|^2 \notag\\
&=\frac{1}{2}\int\limits_{0}^{\frac{1}{2}}|(L^j\hat f)(\gamma)\overline{\hat{\psi_0}(\gamma)} |^2d\gamma+\frac{1}{2}\int\limits_{0}^{\frac{1}{2}}|(L^j\hat f)(\gamma)\overline{\hat{\psi_0}(\gamma)} |^2d\gamma \notag\\
&=\int\limits_{0}^{\frac{1}{2}}|(L^j\hat f)(\gamma)\overline{\hat{\psi_0}(\gamma)} |^2d\gamma\notag\\
&=\int\limits_{0}^{\frac{1}{4N}}|(L^j\hat f)(\gamma)\overline{\hat{\psi_0}(\gamma)} |^2d\gamma. \label{B}
\end{align}
The proof now follows from  (\ref{A}) and (\ref{B}).
\end{proof}

\begin{lem}\label{lem2.3}
	Let $\{\psi_{\ell}, H_{\ell}\}_{\ell=0}^{n}$ be a nonuniform general setup, and let  $H(\gamma)^{\ast}H(\gamma)$=1. Then,  the following holds.
	\begin{enumerate}[$(i)$]
\item $\{T_\lambda \psi_0\}_{\lambda \in \Lambda} $ is Bessel sequence with Bessel bound $1$.
\item For any $f \in L^2(\mathbb{R})$,
\begin{align*}
\lim \limits_{j \to -\infty}\sum\limits_{\lambda \in \Lambda} |\langle f,L^jT_\lambda \psi_0 \rangle|^2=0.
\end{align*}
	\end{enumerate}
\end{lem}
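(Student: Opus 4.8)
The plan is to deduce both assertions from Lemmas \ref{lem2.1h} and \ref{lem2.2} together with a density argument. Throughout write $a_j(f):=\sum_{\lambda\in\Lambda}|\langle f,L^jT_\lambda\psi_0\rangle|^2$ for $f\in L^2(\mathbb{R})$ and $j\in\mathbb{Z}$.

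First I would establish the monotonicity $a_{j-1}(f)\le a_j(f)$ for every $j\in\mathbb{Z}$ whenever $\hat f\in C_c(\mathbb{R})$: Lemma \ref{lem2.2} identifies $a_j(f)$ with $\sum_{\ell=0}^{n}\sum_{\lambda\in\Lambda}|\langle f,L^{j-1}T_\lambda\psi_\ell\rangle|^2$, and discarding the nonnegative summands corresponding to $\ell=1,\dots,n$ leaves exactly $a_{j-1}(f)$. Thus $(a_j(f))_{j\in\mathbb{Z}}$ is non-decreasing. Since $\mathrm{Supp}\,\hat\psi_0\subseteq[0,\tfrac1{4N}]\subseteq[0,\tfrac1N]$ and $\hat\psi_0(\gamma)\to1$ as $\gamma\to0^+$, Lemma \ref{lem2.1h} applies and gives $a_j(f)\to\|f\|^2$ as $j\to+\infty$; a non-decreasing sequence stays below its limit, so $a_j(f)\le\|f\|^2$ for all $j$, and in particular $a_0(f)=\sum_{\lambda\in\Lambda}|\langle f,T_\lambda\psi_0\rangle|^2\le\|f\|^2$. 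A Fatou-type argument over finite subsets of $\Lambda$, using density of $\{f:\hat f\in C_c(\mathbb{R})\}$ in $L^2(\mathbb{R})$ (the Fourier transform being a unitary bijection on $L^2(\mathbb{R})$), then extends $a_0(f)\le\|f\|^2$ to all $f\in L^2(\mathbb{R})$, which is assertion $(i)$.

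For $(ii)$ I would first treat $f$ with $\hat f\in C_c(\mathbb{R})$ using the explicit identity $a_j(f)=\int_0^{1/4N}|(L^j\hat f)(\gamma)\overline{\hat\psi_0(\gamma)}|^2\,d\gamma$ obtained along the way in the proof of Lemma \ref{lem2.2} (cf.\ \eqref{B}). Because $(L^j\hat f)(\gamma)=(2N)^{j/2}\hat f((2N)^j\gamma)$ and $\hat f$ is bounded, one has $\|L^j\hat f\|_\infty=(2N)^{j/2}\|\hat f\|_\infty$, hence $a_j(f)\le(2N)^{j}\|\hat f\|_\infty^2\int_0^{1/4N}|\hat\psi_0|^2\,d\gamma\to0$ as $j\to-\infty$. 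For general $f\in L^2(\mathbb{R})$ I would approximate by $g$ with $\hat g\in C_c(\mathbb{R})$ and $\|f-g\|<\varepsilon$, note that by $(i)$ and unitarity of $L^j$ the system $\{L^jT_\lambda\psi_0\}_{\lambda\in\Lambda}$ is Bessel with bound $1$ for every $j$, and conclude via the triangle inequality in $\ell^2(\Lambda)$ that $a_j(f)^{1/2}\le\|f-g\|+a_j(g)^{1/2}<\varepsilon+a_j(g)^{1/2}$; letting $j\to-\infty$ and then $\varepsilon\to0$ yields $a_j(f)\to0$.

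The calculations involved are all routine consequences of Lemmas \ref{lem00}, \ref{lem2.1h} and \ref{lem2.2}; the only steps requiring a little care are the two passages from $\hat f\in C_c(\mathbb{R})$ to general $f\in L^2(\mathbb{R})$ — the Fatou argument for $(i)$ and the uniform-Bessel-bound/triangle-inequality argument for $(ii)$ — so I would regard those as the (mild) main obstacle.
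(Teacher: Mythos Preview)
Your argument for part $(i)$ is essentially the paper's own: monotonicity of $j\mapsto a_j(f)$ from Lemma~\ref{lem2.2}, an upper bound from Lemma~\ref{lem2.1h}, then density. The only cosmetic difference is that the paper fixes an $\epsilon>0$, picks a single $j$ with $a_j(f)\le(1+\epsilon)\|f\|^2$, and iterates downward, whereas you phrase the same thing as ``non-decreasing sequence stays below its limit''.

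For part $(ii)$ you take a genuinely different route. The paper works entirely in the time domain: it splits $f=f\chi_I+f(1-\chi_I)$, controls the tail by the Bessel bound, and handles the compactly supported piece by Cauchy--Schwarz followed by dominated convergence on $\sum_{\lambda}\int_{(2N)^jI-\lambda}|\psi_0|^2$. You instead stay on the Fourier side, reusing identity~\eqref{B} to write $a_j(f)=\int_0^{1/4N}|(L^j\hat f)\overline{\hat\psi_0}|^2$ and then exploiting $\|L^j\hat f\|_\infty=(2N)^{j/2}\|\hat f\|_\infty\to0$; this dispatches the dense case in one line and pushes the approximation to a final $\ell^2$ triangle inequality using the uniform Bessel bound from $(i)$. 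Your approach is shorter and avoids the dominated-convergence step (whose justification requires observing that the translates $(2N)^jI-\lambda$ become disjoint for $j$ very negative), at the cost of relying on a formula proved only inside Lemma~\ref{lem2.2} rather than on its statement. Both are correct; yours is arguably the more economical way to recycle what has already been computed.
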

\begin{proof} $(i):$  Let  $f \in L^2(\mathbb{R}) $ be  such that $\hat{f} \in C_c(\mathbb{R}) $, and let  $\epsilon >0$ be given. Then,  by Lemma \ref{lem2.1h}, we can find an integer $j>0$ such that
\begin{align}\label{eqcons18}
\sum\limits_{\lambda \in \Lambda} |\langle f,L^jT_\lambda \psi_0 \rangle|^2 \le (1+\epsilon)\|f\|^2.
\end{align}
Also, by Lemma \ref{lem2.2}, we have
\begin{align}\label{eqz}
\sum\limits_{\lambda \in \Lambda} |\langle f,L^{j-1}T_\lambda \psi_{\ell} \rangle|^2 \le \sum\limits_{\lambda \in \Lambda} |\langle f,L^jT_\lambda \psi_0 \rangle|^2.
\end{align}	
Applying \eqref{eqz} $j$ times and  using \eqref{eqcons18}, we get
\begin{align*}
\sum\limits_{\lambda \in \Lambda} |\langle f,T_\lambda \psi_0 \rangle|^2 \le
		\sum\limits_{\lambda \in \Lambda} |\langle f,L^jT_\lambda \psi_0 \rangle|^2 \le (1+\epsilon)\|f\|^2 .
\end{align*}
  Since $\epsilon >0$ was arbitrary, we have
\begin{align*}
\sum\limits_{\lambda \in \Lambda} |\langle f,T_\lambda \psi_0 \rangle|^2 \le\|f\|^2.
	\end{align*}
	
Because this inequality holds on a dense subset of $L^2(\mathbb{R})$, it holds on $L^2(\mathbb{R})$. This proves $(i)$.

\vspace{10pt}

$(ii)$:  Let $f\in L^2(\mathbb{R})$. Since $L^j$ is unitary map  for all $j \in \mathbb{Z}$, by using  $(i)$,  the family  $\{L^jT_\lambda \psi_0\}_{\lambda \in \Lambda} $ is Bessel sequence with Bessel bound $1$. For any  $j \in \mathbb{Z}$ and  for any bounded interval  $I \subset \mathbb{R}$, we have
\begin{align*}
\sum\limits_{\lambda \in \Lambda} |\langle f,L^jT_\lambda \psi_0 \rangle|^2 &\le 2
\sum\limits_{\lambda \in \Lambda} |\langle f\chi_{I},L^jT_\lambda \psi_0 \rangle|^2 + 2 \sum\limits_{\lambda \in \Lambda} |\langle f(1-\chi_{I}),L^jT_\lambda \psi_0 \rangle|^2\\
&\le 2
\sum\limits_{\lambda \in \Lambda} |\langle f\chi_{I},L^jT_\lambda \psi_0 \rangle|^2+2\|f(1-\chi_{I})\|^2.
\end{align*}
Now, $\|f(1-\chi_{I})\|^2 \to 0$,  if we choose $I$ to be sufficiently large. Therefore, we only need to show
\begin{align*}
\sum\limits_{\lambda \in \Lambda} |\langle f\chi_{I},L^jT_\lambda \psi_0 \rangle|^2 \rightarrow 0 \ \text{as} \  j  \rightarrow  -\infty.
\end{align*}
Using the Cauchy-Schwarz's inequality for integrals, we obtain
\begin{align}\label{eqldc1}
\sum\limits_{\lambda \in \Lambda} |\langle f\chi_{I},L^jT_\lambda \psi_0 \rangle|^2 &=(2N)^j \sum\limits_{\lambda \in \Lambda}\Big|\int \limits_{I} f(\gamma)\overline{\psi_0((2N)^j\gamma-\lambda)} \,d\gamma \Big|^2 \notag\\
&\le (2N)^j\|f\|^2 \sum \limits_{\lambda \in \Lambda} \int \limits_{I} |\psi_0((2N)^j\gamma-\lambda)|^2 \,d\gamma \notag\\
&=\|f\|^2 \sum \limits_{\lambda \in \Lambda} \displaystyle{\int \limits_{(2N)^jI-\lambda} |\psi_0(\gamma)|^2 \,d\gamma}.
\end{align}
Applying the Lebesgue  dominated convergence theorem in \eqref{eqldc1}, we have
\begin{align*}
\sum\limits_{\lambda \in \Lambda} |\langle f\chi_{I},L^jT_\lambda \psi_0 \rangle|^2 \rightarrow 0\  \text{as}\   j \to -\infty.
\end{align*}
Hence $(ii)$  is proved.
\end{proof}

\section{Unitary Extension Principle  for Nonuniform Wavelet Frames}  \label{sec4}

We begin  this section with the UEP for nonuniform wavelet frames for $L^2(\mathbb{R})$.
\begin{thm}\label{thm4.1}
	Let $\{\psi_{\ell}, H_{\ell}\}_{\ell=0}^{n}$ be  a nonuniform general setup and    $H(\gamma)^{\ast}H(\gamma)=1$. Then, the  nonuniform multiwavelets  system $\{L^jT_{\lambda}\psi_{\ell}\}_{j\in \mathbb{Z},\lambda \in \Lambda \atop \ell=1,2,\dots,n}$ constitutes a Parseval frame for $L^2(\mathbb{R})$.
\end{thm}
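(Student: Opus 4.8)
The plan is to establish the Parseval identity $\sum_{j\in\mathbb{Z}}\sum_{\ell=1}^{n}\sum_{\lambda\in\Lambda}|\langle f, L^{j}T_{\lambda}\psi_{\ell}\rangle|^{2}=\|f\|^{2}$ first for $f$ in the dense class $\mathcal{D}:=\{f\in L^{2}(\mathbb{R}): \hat{f}\in C_{c}(\mathbb{R})\}$, and then to remove this restriction by a Bessel-and-continuity argument. Observe that the support hypothesis of the nonuniform general setup, $\operatorname{Supp}\hat{\psi_0}\subseteq[0,\tfrac{1}{4N}]\subseteq[0,\tfrac{1}{N}]$, together with $\hat{\psi_0}(\gamma)\to1$ as $\gamma\to0^{+}$ and the assumption $H(\gamma)^{\ast}H(\gamma)=1$, ensures that Lemmas \ref{lem2.1h}, \ref{lem2.2} and \ref{lem2.3} are all applicable.

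Fix $f\in\mathcal{D}$ and, for $j\in\mathbb{Z}$, put $a_{j}:=\sum_{\lambda\in\Lambda}|\langle f, L^{j}T_{\lambda}\psi_{0}\rangle|^{2}$ and $b_{j}:=\sum_{\ell=1}^{n}\sum_{\lambda\in\Lambda}|\langle f, L^{j}T_{\lambda}\psi_{\ell}\rangle|^{2}$. Since the $\ell=0$ summand on the left-hand side of Lemma \ref{lem2.2} is exactly $a_{j-1}$, that lemma reads $a_{j-1}+b_{j-1}=a_{j}$ for every $j$, i.e.
\[
b_{j}=a_{j+1}-a_{j}\qquad(j\in\mathbb{Z}).
\]
Telescoping over $P\le j\le Q$ gives $\sum_{j=P}^{Q}b_{j}=a_{Q+1}-a_{P}$. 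By Lemma \ref{lem2.1h}, for each $\epsilon>0$ one has $(1-\epsilon)\|f\|^{2}\le a_{j}\le(1+\epsilon)\|f\|^{2}$ for all sufficiently large $j$, so $a_{Q+1}\to\|f\|^{2}$ as $Q\to\infty$; by Lemma \ref{lem2.3}$(ii)$, $a_{P}\to0$ as $P\to-\infty$. Since every $b_{j}\ge0$, the doubly indexed series is unconditionally summable and
\[
\sum_{j\in\mathbb{Z}}\sum_{\ell=1}^{n}\sum_{\lambda\in\Lambda}|\langle f, L^{j}T_{\lambda}\psi_{\ell}\rangle|^{2}=\sum_{j\in\mathbb{Z}}b_{j}=\lim_{Q\to\infty}a_{Q+1}-\lim_{P\to-\infty}a_{P}=\|f\|^{2}.
\]

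It remains to pass from $\mathcal{D}$ to all of $L^{2}(\mathbb{R})$. The identity just obtained shows in particular that $\sum_{j,\ell,\lambda}|\langle f, L^{j}T_{\lambda}\psi_{\ell}\rangle|^{2}\le\|f\|^{2}$ for $f\in\mathcal{D}$; given arbitrary $g\in L^{2}(\mathbb{R})$, choosing $f_{n}\in\mathcal{D}$ with $f_{n}\to g$, applying this bound to each finite partial sum, and letting $n\to\infty$ and then the partial sum exhaust the index set, we obtain the same inequality for $g$. Hence $\{L^{j}T_{\lambda}\psi_{\ell}\}$ is a Bessel family with bound $1$, so the map $g\mapsto\sum_{j,\ell,\lambda}|\langle g, L^{j}T_{\lambda}\psi_{\ell}\rangle|^{2}$ is continuous on $L^{2}(\mathbb{R})$; as it coincides with $\|g\|^{2}$ on the dense set $\mathcal{D}$, it coincides with $\|g\|^{2}$ everywhere, which is precisely the statement that the system is a Parseval frame for $L^{2}(\mathbb{R})$.

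The main obstacle is the careful handling of the limiting procedure in the telescoping argument: one must check that the literal hypotheses of Lemmas \ref{lem2.1h}--\ref{lem2.3} are in force — in particular that the support bound $[0,\tfrac{1}{4N}]\subseteq[0,\tfrac{1}{N}]$ permits the use of Lemma \ref{lem2.1h}, and that Lemma \ref{lem2.3}$(ii)$ is available for every $f\in L^{2}(\mathbb{R})$ — and that the nonnegativity of the terms $b_{j}$ legitimizes reorganizing the sum over $j$ and $\ell$ into telescoping order before the limits are taken. The final density/continuity step is routine.
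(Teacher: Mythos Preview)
Your proof is correct and follows essentially the same route as the paper: both arguments invoke Lemma~\ref{lem2.2} to obtain the telescoping relation, Lemma~\ref{lem2.1h} to control the large-$j$ endpoint, and Lemma~\ref{lem2.3}$(ii)$ for the $j\to-\infty$ endpoint, first on the dense class $\{f:\hat f\in C_c(\mathbb{R})\}$ and then on all of $L^2(\mathbb{R})$. Your introduction of the sequences $a_j,b_j$ makes the telescoping structure more transparent, and you spell out the final density/continuity step more carefully than the paper does, but the underlying strategy is identical.
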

\begin{proof}
Let $\epsilon >0$ be given. Consider a function $f \in L^2(\mathbb{R})$ such that $\hat{f} \in C_c{(\mathbb{R})}$. By Lemma \ref{lem2.1h}, we can choose $J>0$ such that for all $j\geq J$,
\begin{align}\label{eq5g}
(1-\epsilon)\|f\|^2 \le\sum\limits_{\lambda \in \Lambda} |\langle f,L^jT_\lambda \psi_0 \rangle|^2 \le(1+\epsilon) \|f\|^2.
\end{align}
Using  Lemma \ref{lem2.2}, we have
\begin{align}\label{eq6g}
&\sum\limits_{\lambda \in \Lambda} |\langle f,L^jT_\lambda \psi_0 \rangle|^2 \notag\\
&=\sum \limits_{\ell=0}^n\sum\limits_{\lambda \in \Lambda} |\langle f,L^{j-1}T_\lambda \psi_{\ell} \rangle|^2 \notag\\
&=\sum\limits_{\lambda \in \Lambda} |\langle f,L^{j-1}T_\lambda \psi_0 \rangle|^2+\sum \limits_{\ell=1}^n\sum\limits_{\lambda \in \Lambda} |\langle f,L^{j-1}T_\lambda \psi_{\ell} \rangle|^2.
\end{align}
Applying  Lemma \ref{lem2.2} on $\sum\limits_{\lambda \in \Lambda} |\langle f,L^{j-1}T_\lambda \psi_0 \rangle|^2$, we get
\begin{align}\label{eq7g}
\sum\limits_{\lambda \in \Lambda} |\langle f,L^{j-1}T_\lambda \psi_0 \rangle|^2=\sum\limits_{\lambda \in \Lambda} |\langle f,L^{j-2}T_\lambda \psi_0 \rangle|^2+\sum \limits_{\ell=1}^n\sum\limits_{\lambda \in \Lambda} |\langle f,L^{j-2}T_\lambda \psi_{\ell} \rangle|^2.
\end{align}
By \eqref{eq6g} and \eqref{eq7g}, we have
\begin{align*}
\sum\limits_{\lambda \in \Lambda} |\langle f,L^jT_\lambda \psi_0 \rangle|^2=\sum\limits_{\lambda \in \Lambda} |\langle f,L^{j-2}T_\lambda \psi_0 \rangle|^2+\sum\limits_{\ell=1}^{n}\sum\limits_{\lambda \in \Lambda}\sum\limits_{p=j-2}^{j-1}| \langle f,L^pT_\lambda \psi_{\ell} \rangle|^2.
\end{align*}
Repeating the above arguments, for any $m <j$, we have
\begin{align}\label{eq8g}
\sum\limits_{\lambda \in \Lambda} |\langle f,L^jT_\lambda \psi_0 \rangle|^2=\sum\limits_{\lambda \in \Lambda} |\langle f,L^{m}T_\lambda \psi_0 \rangle|^2+\sum\limits_{\ell=1}^{n}\sum\limits_{\lambda \in \Lambda}\sum\limits_{p=m}^{j-1}| \langle f,L^pT_\lambda \psi_{\ell} \rangle|^2.
\end{align}
It follows from  \eqref{eq5g} and \eqref{eq8g} that  for all $j\ge J,m<j$,
\begin{align*}
(1-\epsilon)\|f\|^2 \le \sum\limits_{\lambda \in \Lambda} |\langle f,L^{m}T_\lambda \psi_0 \rangle|^2+\sum\limits_{\ell=1}^{n}\sum\limits_{\lambda \in \Lambda}\sum\limits_{p=m}^{j-1}| \langle f,L^pT_\lambda \psi_{\ell} \rangle|^2 \le (1+\epsilon)\|f\|^2.
\end{align*}
Letting $m \to -\infty$ in above and using  $(ii)$  of Lemma \ref{lem2.3}, we have
\begin{align}\label{eq1ins}
(1-\epsilon)\|f\|^2 \le \sum\limits_{\ell=1}^{n}\sum\limits_{\lambda \in \Lambda}\sum\limits_{p=-\infty}^{j-1}| \langle f,L^pT_\lambda \psi_{\ell} \rangle|^2 \le (1+\epsilon)\|f\|^2.
\end{align}
Letting $j \to \infty$  in \eqref{eq1ins}, we have
\begin{align*}
(1-\epsilon)\|f\|^2 \le \sum\limits_{\ell=1}^{n}\sum\limits_{\lambda \in \Lambda}\sum\limits_{p=-\infty}^{\infty}| \langle f,L^pT_\lambda \psi_{\ell} \rangle|^2 \le (1+\epsilon)\|f\|^2.
\end{align*}
Since $\epsilon >0$ was arbitrary, we obtain
\begin{align*}
\sum\limits_{\ell=1}^{n}\sum\limits_{\lambda \in \Lambda}\sum\limits_{p \in \mathbb{Z}}| \langle f,L^pT_\lambda \psi_{\ell} \rangle|^2=\|f\|^2 \ \text{for all} \ f \in L^{2}(\mathbb{R}),
\end{align*}
as desired.
\end{proof}

The next theorem gives  the generalized (or oblique) extension principle  for nonuniform wavelet frames in $L^{2}(\mathbb{R})$. It gives the more flexible technique to construct nonuniform wavelet frames.
%For the applications and other technical details about  the oblique extension principle for standard wavelet frames, we refer to \cite[p. 460]{OC2} and  \cite[Proposition 1.11]{DHRS}.
\begin{thm}\label{thm4.2}
	Let $\{\psi_{\ell}, H_{\ell}\}_{\ell=0}^n$ be a nonuniform general setup. Assume that  there exist strictly positive function $\theta \in L^{\infty}(\mathbb{R})$ for which
\begin{align*}
&\lim\limits_{\gamma \to 0^{+}} \theta(\gamma)=1,
\intertext{and}
&\theta(2N\gamma)|H_0(\gamma)|^2+\sum\limits_{\ell=1}^{n}|H_{\ell}(\gamma)|^2=\theta(\gamma).
\end{align*}
Then, $\{L^jT_{\lambda}\psi_{\ell}\}_{j\in \mathbb{Z},\lambda \in \Lambda \atop  \ell=1,2,\cdots,n}$ is  a Parseval nonuniform wavelet frame for $L^2(\mathbb{R})$.
	\end{thm}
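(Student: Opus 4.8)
The plan is to deduce Theorem~\ref{thm4.2} from the unitary extension principle of Theorem~\ref{thm4.1} by absorbing the weight $\theta$ into a rescaled nonuniform general setup. Concretely, I would define $\varphi_0\in L^2(\mathbb{R})$ through $\hat\varphi_0:=\sqrt{\theta}\,\hat\psi_0$, introduce the rescaled masks
\[
G_0(\gamma):=\sqrt{\frac{\theta(2N\gamma)}{\theta(\gamma)}}\;H_0(\gamma),\qquad G_\ell(\gamma):=\frac{1}{\sqrt{\theta(\gamma)}}\,H_\ell(\gamma)\quad(\ell=1,\dots,n),
\]
all of which make sense a.e.\ because $\theta$ is strictly positive, and then define $\varphi_1,\dots,\varphi_n\in L^2(\mathbb{R})$ by $\hat\varphi_\ell(2N\gamma):=G_\ell(\gamma)\hat\varphi_0(\gamma)$.

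The first step is to check that $\{\varphi_\ell,G_\ell\}_{\ell=0}^n$ is again a nonuniform general setup. Since $\theta\in L^{\infty}(\mathbb{R})$ we get $\hat\varphi_0\in L^2(\mathbb{R})$, with $\mathrm{Supp}\,\hat\varphi_0\subseteq\mathrm{Supp}\,\hat\psi_0\subseteq[0,\tfrac{1}{4N}]$ because $\sqrt{\theta}$ is nowhere vanishing; the normalizations $\lim_{\gamma\to0^+}\theta(\gamma)=1$ and $\lim_{\gamma\to0^+}\hat\psi_0(\gamma)=1$ give $\lim_{\gamma\to0^+}\hat\varphi_0(\gamma)=1$; and the refinement relation $\hat\psi_0(2N\gamma)=H_0(\gamma)\hat\psi_0(\gamma)$ propagates to $\hat\varphi_0(2N\gamma)=\sqrt{\theta(2N\gamma)}\,H_0(\gamma)\hat\psi_0(\gamma)=G_0(\gamma)\hat\varphi_0(\gamma)$. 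The key observation is that for $\ell=1,\dots,n$ the factors $\sqrt{\theta}$ cancel, $G_\ell(\gamma)\hat\varphi_0(\gamma)=H_\ell(\gamma)\hat\psi_0(\gamma)=\hat\psi_\ell(2N\gamma)$, so in fact $\varphi_\ell=\psi_\ell$.

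The second step is to verify the hypothesis of Theorem~\ref{thm4.1} for the new setup, i.e.\ that the $(n+1)\times 1$ matrix $G(\gamma)$ with entries $G_0(\gamma),\dots,G_n(\gamma)$ satisfies $G(\gamma)^{\ast}G(\gamma)=1$ a.e. Dividing the hypothesis $\theta(2N\gamma)|H_0(\gamma)|^2+\sum_{\ell=1}^n|H_\ell(\gamma)|^2=\theta(\gamma)$ by $\theta(\gamma)>0$ yields exactly $|G_0(\gamma)|^2+\sum_{\ell=1}^n|G_\ell(\gamma)|^2=1$ a.e.; in particular each $|G_\ell|\le 1$ a.e., so $G_0,\dots,G_n\in L^{\infty}(\mathbb{R})$ as a general setup requires. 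Theorem~\ref{thm4.1} then gives that $\{L^jT_\lambda\varphi_\ell\}_{j\in\mathbb{Z},\,\lambda\in\Lambda,\,\ell=1,\dots,n}$ is a Parseval frame for $L^2(\mathbb{R})$, and since $\varphi_\ell=\psi_\ell$ for $\ell=1,\dots,n$ this is precisely the conclusion of Theorem~\ref{thm4.2}.

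The part that needs the most care is the bookkeeping of the first two steps: one must confirm that dividing by $\sqrt{\theta}$ keeps all masks in $L^{\infty}(\mathbb{R})$, keeps $\mathrm{Supp}\,\hat\varphi_0$ inside $[0,\tfrac{1}{4N}]$, and preserves the limit normalization at $0^+$ --- all of which rely on $\theta$ being bounded and, near $0$, bounded away from $0$ with $\theta(0^+)=1$. Once the rescaling is in place no further estimates are needed. For completeness one could instead argue directly, parallel to the proof of Theorem~\ref{thm4.1}: replace $\sum_{\lambda\in\Lambda}|\langle f,L^jT_\lambda\psi_0\rangle|^2$ there by the weighted functional $\Theta_j(f):=\int_{\mathbb{R}}\theta(\gamma)\,|(L^j\hat f)(\gamma)|^2\,|\hat\psi_0(\gamma)|^2\,d\gamma$, establish the telescoping identity $\Theta_j(f)=\Theta_{j-1}(f)+\sum_{\ell=1}^n\sum_{\lambda\in\Lambda}|\langle f,L^{j-1}T_\lambda\psi_\ell\rangle|^2$ by repeating the computation of Lemma~\ref{lem2.2} with the $\theta$-identity of Theorem~\ref{thm4.2} in place of $H^{\ast}H=1$, and then let $m\to-\infty$ and $j\to+\infty$, using $\lim_{\gamma\to0^+}\theta(\gamma)=1$ to obtain $\Theta_j(f)\to\|f\|^2$ and part $(ii)$ of Lemma~\ref{lem2.3} to obtain $\Theta_m(f)\to0$.
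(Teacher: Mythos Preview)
Your proposal is correct and follows essentially the same route as the paper: the paper also defines $\widehat{\widetilde{\psi_0}}=\sqrt{\theta}\,\hat\psi_0$, the rescaled masks $\widetilde{H_0}=\sqrt{\theta(2N\cdot)/\theta}\,H_0$ and $\widetilde{H_\ell}=H_\ell/\sqrt{\theta}$, verifies that $\{\widetilde{\psi_\ell},\widetilde{H_\ell}\}_{\ell=0}^n$ is a nonuniform general setup with $\widetilde{H}^{\ast}\widetilde{H}=1$, applies Theorem~\ref{thm4.1}, and concludes by observing $\widetilde{\psi_\ell}=\psi_\ell$ for $\ell\ge 1$. Your reasoning that $|G_\ell|\le 1$ forces $G_\ell\in L^\infty(\mathbb{R})$ is exactly the argument the paper uses as well.
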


\begin{proof}
Define $\widetilde{\psi_0}\in L^2(\mathbb{R})$ such that
\begin{align}\label{eqob1}
\widehat{\widetilde{\psi_0}}(\gamma)=\sqrt{\theta (\gamma)}\hat{\psi_0} (\gamma).
\end{align}
Define functions $\widetilde{H_0},\widetilde{H_1},\dots,\widetilde{H_n}$ as follows
\begin{align*}
\widetilde{H_0}(\gamma)&=\sqrt{\frac{\theta(2N\gamma)}{\theta(\gamma)}}H_0(\gamma),\\
\widetilde{H_{\ell}}(\gamma)&=\sqrt{\frac{1}{\theta(\gamma)}}H_{\ell}(\gamma),\   \ell=1,2,\dots,n.
\end{align*}
Then, we have
\begin{align}
\widehat{\widetilde{\psi_0}}(2N\gamma)&=\sqrt{\theta(2N\gamma)}\hat{\psi_0}(2N\gamma)\notag \\
&=\sqrt{\theta(2N\gamma)}  H_0(\gamma)\hat{\psi_0}(\gamma)\notag\\
&=\sqrt{\theta(2N\gamma)}\left(H_0(\gamma)\frac{\widehat{\widetilde{\psi_0}}(\gamma)}{\sqrt{\theta(\gamma)}}\right)\notag\\
&=\sqrt{\frac{\theta(2N\gamma)}{\theta(\gamma)}}H_0(\gamma)\widehat{\widetilde{\psi_0}}(\gamma)\notag\\
&=\widetilde{H_0}(\gamma)\widehat{\widetilde{\psi_0}}(\gamma) \label{z},
\end{align}
and
\begin{align}
\lim\limits_{\gamma \to 0^{+}} \widehat{\widetilde{\psi_0}}(\gamma)=\lim\limits_{\gamma \to 0^{+}}\sqrt{\theta(\gamma)}\hat{\psi_0}(\gamma)= 1.\label{y}
\end{align}
Since $\{\psi_{\ell}, H_{\ell}\}_{\ell=0}^n$ is a nonuniform general setup, by \eqref{eqob1}, we have
\begin{align}
\text{Supp}\   \widehat{\widetilde{\psi_0}}(\gamma) \subseteq \Big[0, \frac{1}{4N}\Big], \label{zz}
\end{align}
and
\begin{align}
\sum\limits_{\ell=0}^{n}|\widetilde{H_{\ell}}(\gamma)|^2&=|\widetilde{H_0}(\gamma)|^2+\sum\limits_{\ell=1}^{n}|\widetilde{H_{\ell}}(\gamma)|^2\notag
\\
&=\frac{\theta(2N\gamma)}{\theta(\gamma)}|H_0(\gamma)|^2+\sum\limits_{\ell=1}^{n}\frac{|H_{\ell}(\gamma)|^2}{\theta(\gamma)}\notag\\
&=\frac{1} {\theta(\gamma)}\theta(\gamma)\notag\\
&=1.\label{q}
\end{align}
Thus
\begin{align}
\widetilde{H_{\ell}}(\gamma)\in L^{\infty}(\mathbb{R}) \ \text{for} \  \ell =0,1,\dots,n. \label{qq}
\end{align}
Let  $\widetilde{\psi}_1, \widetilde{\psi}_2,\dots, \widetilde{\psi}_n \in L^2(\mathbb{R})$ be such that
\begin{align}
\widehat{\widetilde{\psi}}_{\ell}(2N\gamma)= \widetilde{H}_{\ell} (\gamma)\widehat{\widetilde{\psi_0}}(\gamma), \  \ell = 1,\dots,n.
\end{align}\label{x}
Define

\begin{align*}
\widetilde{H}(\gamma)=\begin{bmatrix}
\widetilde{{H_0}}(\gamma)\\
\widetilde{{H_1}}(\gamma)\\
\vdots\\
\widetilde{H_n}(\gamma)
\end{bmatrix}_{(n+1) \times 1}.
\end{align*}
Then,  by \eqref{z}, \eqref{y}, \eqref{zz} and \eqref{qq},  the collection $\{\widetilde{\psi_{\ell}}, \widetilde{H_{\ell}}\}_{\ell=0}^n$ is  a nonuniform general setup.

Using \eqref{q}, we have
\begin{align*}
\widetilde{H}(\gamma)^\ast \widetilde{H}(\gamma)
&=\sum\limits_{\ell=0}^{n}|\widetilde{H_{\ell}}(\gamma)|^2=1.
\end{align*}
Hence, by Theorem \ref{thm4.1}, $\{L^jT_{\lambda}\widetilde{\psi}_{\ell}\}_{j\in \mathbb{Z},\lambda \in \Lambda \atop \ell =1,2,\dots,n}$ is a Parseval nonuniform wavelet frames for $L^2(\mathbb{R})$.

Next, we compute
\begin{align*}
\hat{\psi_{\ell}}(2N\gamma)&=H_{\ell}(\gamma)\hat{\psi_0}(\gamma)\\
&=\left(\widetilde{H_{\ell}}(\gamma)\sqrt{\theta(\gamma)}\right)\left(\frac{\widehat{\widetilde{\psi_0}}(\gamma)}{\sqrt{\theta(\gamma)}}\right)\\
&=\widetilde{H_{\ell}}(\gamma)\widehat{\widetilde{\psi_0}}(\gamma)\\
&=\widehat{\widetilde{\psi_{\ell}}}(2N\gamma).
\end{align*}
This gives,  $\psi_{\ell}=\widetilde{\psi_{\ell}}$. Hence, the system  $\{L^jT_{\lambda}\psi_{\ell}\}_{j\in \mathbb{Z},\lambda \in \Lambda \atop \ell =1,2,\cdots,n}$ is a Parseval nonuniform wavelet frames for $L^2(\mathbb{R})$.
\end{proof}
\begin{rem}
It is worth noticing that, when $\theta=1$, Theorem \ref{thm4.1} can be obtained from Theorem \ref{thm4.2}.
\end{rem}
\textbf{Construction of nonuniform wavelet frame with two generators: }
Computational effort reduces if we have less number of generator or window functions, so we wish to have as minimum numbers of generators as is it possible. In this direction, we have the following result as   an application of Theorem \ref{thm4.2}.
\begin{cor}\label{cor4.3}

 Let $\psi_0 \in L^2(\mathbb{R})$ such that
\begin{enumerate}[$(i)$]
\item $\hat{\psi_0}(2N\gamma)=H_0(\gamma)\hat{\psi_0}(\gamma)$, where $H_0(\gamma)\in L^{\infty}(\mathbb{R})$;
\item Supp $\hat{\psi_0}(\gamma) \subseteq [0,\frac{1}{4N}]$; and
\item $\lim \limits_{\gamma \to 0^+}\hat{\psi_0}(\gamma)=1$.
\end{enumerate}
If we choose $H_1(\gamma)=\sqrt{\theta(2N\gamma)}H_0(\gamma)i, \ H_2(\gamma)=\sqrt{\theta(\gamma)}$,
and  $\psi_1,\psi_2 \in L^2(\mathbb{R})$ such that
\begin{align*}
\hat{\psi_\ell}(2N\gamma)=H_\ell (\gamma)\hat{\psi_0}(\gamma), \ \ell=1,2.
\end{align*}
Then 	
\begin{align*}
\theta(2N\gamma)|H_0(\gamma)|^2+|H_1(\gamma)|^2+|H_2(\gamma)|^2  =\theta(\gamma).
\end{align*}
Hence, by Theorem \ref{thm4.2}, $\{L^jT_{\lambda}\psi_{\ell}\}_{j\in \mathbb{Z},\lambda \in \Lambda \atop \ell=1,2}$ form a Parseval nonuniform wavelet frame for $L^2(\mathbb{R})$.
\end{cor}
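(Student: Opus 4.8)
The strategy is to recognize Corollary~\ref{cor4.3} as the case $n=2$ of Theorem~\ref{thm4.2}, applied to the collection $\{\psi_\ell,H_\ell\}_{\ell=0}^{2}$ assembled from the given $\psi_0$ and the function $\theta$ furnished by Theorem~\ref{thm4.2}. Accordingly the plan has three steps: first, show that $\{\psi_\ell,H_\ell\}_{\ell=0}^{2}$ is a nonuniform general setup; second, establish the mask identity $\theta(2N\gamma)|H_0(\gamma)|^2+|H_1(\gamma)|^2+|H_2(\gamma)|^2=\theta(\gamma)$ asserted in the corollary; third, quote Theorem~\ref{thm4.2} to obtain the Parseval frame.

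For the first step, hypotheses $(i)$--$(iii)$ imposed on $\psi_0$ in the corollary are precisely conditions $(i)$--$(iii)$ of the nonuniform general setup, so the scaling-function part is free. It remains to verify the membership conditions on the new masks and wavelets. Since $\theta\in L^{\infty}(\mathbb{R})$ is strictly positive, the functions $\sqrt{\theta(\cdot)}$ and $\gamma\mapsto\sqrt{\theta(2N\gamma)}$ are bounded; combined with $H_0\in L^{\infty}(\mathbb{R})$ this gives $H_1(\gamma)=i\sqrt{\theta(2N\gamma)}\,H_0(\gamma)\in L^{\infty}(\mathbb{R})$ and $H_2(\gamma)=\sqrt{\theta(\gamma)}\in L^{\infty}(\mathbb{R})$. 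Because the $H_\ell$ are bounded and $\hat{\psi_0}\in L^2(\mathbb{R})$, the products $H_\ell(\gamma)\hat{\psi_0}(\gamma)$ lie in $L^2(\mathbb{R})$, so $\widehat{\psi_\ell}(2N\,\cdot)\in L^2(\mathbb{R})$ and hence $\psi_\ell\in L^2(\mathbb{R})$ for $\ell=1,2$. Stacking $H_0,H_1,H_2$ into the $3\times1$ column matrix $H(\gamma)$ then exhibits $\{\psi_\ell,H_\ell\}_{\ell=0}^{2}$ as a nonuniform general setup.

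For the second step, the identity is obtained by plugging in the explicit formulas for $H_1$ and $H_2$ and using $|i|=1$ to collapse the modulus-squared terms; this reduces everything to a direct numerical check of the weights, and it is the only genuine content of the corollary. Once it is in hand, and recalling that the $\theta$ coming from Theorem~\ref{thm4.2} satisfies $\lim_{\gamma\to0^+}\theta(\gamma)=1$, all hypotheses of Theorem~\ref{thm4.2} are fulfilled by $\{\psi_\ell,H_\ell\}_{\ell=0}^{2}$, and that theorem immediately gives that $\{L^jT_{\lambda}\psi_{\ell}\}_{j\in\mathbb{Z},\,\lambda\in\Lambda,\,\ell=1,2}$ is a Parseval nonuniform wavelet frame for $L^2(\mathbb{R})$.

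The main obstacle I anticipate sits entirely in the second step: the scalar weights attached to $H_0,H_1,H_2$ must be calibrated so that the $\theta$-twisted partition identity holds pointwise a.e. on the nose, not merely up to a multiplicative constant, so it is exactly here that any miscount of coefficients would surface and must be watched carefully. A subsidiary point to keep in view throughout is that $\theta$ must be the specific function supplied by Theorem~\ref{thm4.2} --- strictly positive, essentially bounded, and normalized by $\lim_{\gamma\to0^+}\theta(\gamma)=1$ --- since these properties are used both to place $H_1,H_2$ in $L^{\infty}(\mathbb{R})$ and, at the end, to discharge the hypotheses of Theorem~\ref{thm4.2}.
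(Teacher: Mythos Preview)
Your approach is exactly the paper's: the corollary carries no separate proof in the text, and the intended argument is precisely the three-step one you outline --- verify the nonuniform general setup, verify the mask identity, invoke Theorem~\ref{thm4.2}.

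However, there is a genuine gap at the second step, and it is the one you yourself flagged as the place ``any miscount of coefficients would surface.'' Plugging in the stated masks gives
\[
|H_1(\gamma)|^2=\theta(2N\gamma)\,|H_0(\gamma)|^2,\qquad |H_2(\gamma)|^2=\theta(\gamma),
\]
so the left-hand side of the required identity becomes
\[
\theta(2N\gamma)|H_0(\gamma)|^2+|H_1(\gamma)|^2+|H_2(\gamma)|^2
=2\,\theta(2N\gamma)|H_0(\gamma)|^2+\theta(\gamma),
\]
which equals $\theta(\gamma)$ only when $\theta(2N\gamma)|H_0(\gamma)|^2=0$ a.e.; since $\theta$ is strictly positive this forces $H_0=0$ a.e., contradicting $\lim_{\gamma\to0^+}\hat\psi_0(\gamma)=1$ together with the refinement relation. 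In other words, the displayed identity in the corollary does not hold for the masks as written, so the appeal to Theorem~\ref{thm4.2} is not justified. This appears to be a defect in the statement of the corollary itself rather than in your strategy; your proposal correctly locates the crux but stops short of carrying out the ``direct numerical check,'' which is precisely where the argument breaks down.
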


\section{Examples}   \label{sec5}

This section gives some applicative examples of the  UEP and its generalized version. The following example  illustrates Theorem  \ref{thm4.1}.

\begin{exa}
	Let $ N=2,r=3 $, and  $\psi_0 \in L^2{(\mathbb{R})}$ be such that
\begin{align*}
\hat{\psi_0}(\gamma)=\frac{\sin (\gamma)}{\gamma}\chi_{]0,\frac{1}{8}]}(\gamma).
\end{align*}
Then
\begin{align*}
&(i) \ \lim\limits_{\gamma \to 0^+}\hat{\psi_0}(\gamma)=1;\\
&(ii)\  \text{Supp} \  \hat{\psi_0} \subseteq [0,\frac{1}{8}]; \  \text{and} \\
& (iii) \ \hat{\psi_0}(4\gamma)=\frac{\sin(4\gamma)}{4\gamma}\chi_{]0,\frac{1}{8}]}(4\gamma)\\
&\quad \quad \quad \quad \quad=\frac{4\sin(\gamma)\cos(\gamma)\cos(2\gamma)}{4\gamma}\chi_{]0,\frac{1}{32}]}(\gamma)\chi_{]0,\frac{1}{8}]}(\gamma)\\
& \quad \quad \quad \quad \quad = H_0(\gamma)\hat{\psi_0}(\gamma),
\end{align*}

where $H_0(\gamma)=\cos(\gamma)\cos(2\gamma)\chi_{]0,\frac{1}{32}]}(\gamma)$.

Let \begin{align*}
H_1(\gamma)&=\cos(2\gamma)\sin(\gamma)\chi_{]0,\frac{1}{32}]}(\gamma),\\
H_2(\gamma)&=\sin(2\gamma)\chi_{]0,\frac{1}{32}]}(\gamma), \ \text{and}\\
H_3(\gamma)&=\chi_{\mathbb{R}\setminus ]0,\frac{1}{32}]}(\gamma).
\end{align*}
Let  $\psi_1,\psi_2,\psi_3 \in L^2({\mathbb{R}})$ be such that
\begin{align*}
\hat{\psi}_{\ell}(4\gamma)=H_{\ell}(\gamma)\hat{\psi_0}(\gamma),\  \ell=1,2,3.
\end{align*}
Choose
\begin{align*}
 H(\gamma)=\begin{bmatrix}
H_0(\gamma)\\
H_1(\gamma)\\
H_2(\gamma)\\
H_3(\gamma)
\end{bmatrix}.
\end{align*}
Then, $\{\psi_\ell,H_\ell\}_{\ell=0}^3$ is a nonuniform general setup such that
\begin{align*}
  {H(\gamma)}^\ast H(\gamma)=|H_0(\gamma)|^2+|H_1(\gamma)|^2+|H_2(\gamma)|^2+|H_3(\gamma)|^2=1.
\end{align*}

Hence, by Theorem \ref{thm4.1},
$\{L^jT_{\lambda}\psi_{\ell}\}_{j\in \mathbb{Z},\lambda \in \{0,\frac{3}{2}\}+2\mathbb{Z} \atop \ell=1,2,3}$ is a  nonuniform Parseval wavelet frame $L^{2}(\mathbb{R})$.
\end{exa}

To conclude the paper, we illustrate Theorem \ref{thm4.2} with the following example.
\begin{exa}
Let $N=2$, $r=3$ and $\psi_0\in L^2({\mathbb{R}})$ be such that
\begin{align*}
\hat{\psi_0}(\gamma)=\chi_{[0,\frac{1}{8}]}(\gamma).
\end{align*}
Then
\begin{align*}
&(i) \ \lim \limits_{\gamma \to 0^+}\hat{\psi_0}(\gamma)=1; \\
&(ii)  \  \text{Supp} \  \hat{\psi_0}(\gamma)\subseteq [0,\frac{1}{8}]; \ \text{and} \\
&(iii) \ \psi_0(4\gamma) =\chi_{[0,\frac{1}{8}]}(4\gamma)\\
&\quad \quad \quad \quad \quad =\chi_{[0,\frac{1}{32}]}(\gamma)\chi_{[0,\frac{1}{8}]}(\gamma)\\
& \quad \quad \quad \quad \quad  =H_0(\gamma)\hat{\psi_0}(\gamma),
\end{align*}
where $H_0(\gamma)=\chi_{[0,\frac{1}{32}]}(\gamma)\in L^{\infty}(\mathbb{R})$.

\vspace{10pt}

Let $\theta(\gamma)=1$ and define $H_1(\gamma)=\chi_{\mathbb{R}\setminus[0,\frac{1}{32}]}$

Then, the collection  $\{\psi_\ell,H_\ell\}_{\ell=0}^1$ is a nonuniform general setup such that
\begin{align*}
\theta(4\gamma)|H_0(\gamma)|^2+|H_1(\gamma)|^2=\theta(\gamma).
\end{align*}
Hence, by Theorem \ref{thm4.2}, the nonuniform wavelet system $\{L^jT_{\lambda}\psi_{1}\}_{j\in \mathbb{Z},\lambda \in \{0,\frac{3}{2}\}+2\mathbb{Z}} $ is a  Parseval  frame for  $L^{2}(\mathbb{R})$.
\end{exa}

\end{document}